\newcommand{\N}{\mathbb{N}}
\newcommand{\mP}{\mathcal{P}}
\newcommand{\mQ}{\mathcal{Q}}
\newtheorem{theorem}{Theorem}
\newtheorem{corollary}{Corollary}
\newtheorem{lemma}{Lemma}
\theoremstyle{definition}
\newtheorem{example}{Example}
\newtheorem{definition}{Definition}
\title{Simultaneous cores with restrictions and a question of Zaleski and Zeilberger}
\author{Paul Johnson}
\address{University of Sheffield}
\email{paul.johnson@sheffield.ac.uk}
\begin{document}
\maketitle
\begin{abstract}
  The main new result of this paper is to count the number of $(n,n+1)$-core partitions with odd parts, answering a question of Zaleski and Zeilberger \cite{ZZodd} with bounty a charitable contribution to the OEIS.  Along the way, we prove a general theorem giving a recurrence for $(n, n+1)$-core parts whose smallest part $\lambda_\ell$ and consecutive part differences $\lambda_i-\lambda_{i+1}$ lie in an arbitrary $M\subset \N$ which unifies many known results about $(n, n+1)$-core partitions with restrictions.  We end with discussions of enrichments of this theorem that keep track of the largest part, number of parts, and size of the partition, and about a few cases where the same methods work on more general simultaneous cores.
  
\end{abstract}

The paper is laid out as follows.  Section \ref{sec:intro} gives a quick background to the question and states our main theorems.  Section \ref{sec:recursion} reviews the abacus construction, and uses it to prove Theorem \ref{thm: }, a general theorem giving recrusions for simultaneous $(n,n+1)$-cores meeting certain restrictions.  Section \ref{sec:odd} relates the count of simultaneous cores with odd parts to that of even parts, answering a question of Zeilberger and Zaleski. Section \ref{sec:enriched} briefly discusses enriching the recursion of the main theorem to take into account other partitions, and Section \ref{sec:ab} briefly touches on a few cases where the basic recursion can move beyond $(n, n+1)$-cores.  

\section{Introduction} \label{sec:intro}
A \emph{partition of $n$} is a nonincreasing sequence $\lambda_1\geq \lambda_2\geq\cdots\geq \lambda_{k}> 0$ of positive integers such that $\sum \lambda_i=n$. We call $n$ the $\emph{size}$ of $\lambda$ and denote it by $|\lambda|$; we call
$k$ the \emph{length} of $\lambda$ and denote it by $\ell(\lambda)$.

We frequently identify $\lambda$ with its Young diagram; there are many conventions for this.  We draw $\lambda$ in the first quadrant, with the parts of $\lambda$ as the columns of a collection of boxes.
\begin{definition}
The \emph{arm} $a(\square)$ of a cell $\square$ is the number of cells contained in $\lambda$ and above $\square$, and the \emph{leg} $l(\square)$ of a cell is the number of cells contained in $\lambda$ and to the right of $\square$.
The \emph{hook length} $h(\square)$ of a cell is $a(\square)+l(\square)+1$.
\end{definition}
\begin{example}
The cell $(2,1)$ of $\lambda=3+2+2+1$ is marked $s$; the cells in the
leg and arm of $s$ are labeled $a$ and $l$, respectively.
\begin{center}
\begin{tikzpicture}[scale=.5]
\draw[thin, gray] (0,0) grid (1,3);
\draw[thin, gray] (1,0) grid (3,2);
\draw[thick] (0,0)--(0,3)--(1,3)--(1,2)--(3,2)--(3,1)--(4,1)--(4,0)--cycle;
\draw (1.5,.5) node{$s$};
\draw (1.5,1.5) node{$a$};
\draw (2.5,.5) node{$l$};
\draw (3.5,.5) node {$l$};
\draw (8.5,1.5) node[align=left] {$a(s)=\# a=1$ \\ $l(s)=\# l=2$ \\ $h(s)=4$};
\end{tikzpicture}
\end{center}
\end{example}
\begin{definition}
An $a$-\emph{core} is a partition that has no hook lengths of size $a$. An $(a,b)$-\emph{core} is a partition that is simultaneously an $a$-core and a $b$-core.  We use $\mathcal{C}_{a,b}$ to denote the set of $(a,b)$-cores.
\end{definition}
\begin{example}
We have labeled each cell $\square$ of $\lambda=3+2+2+1$ with its hook length $h(\square)$.
\begin{center}
\begin{tikzpicture}[scale=.5]
\draw[thin, gray] (0,0) grid (1,3);
\draw[thin, gray] (1,0) grid (3,2);
\draw[thick] (0,0)--(0,3)--(1,3)--(1,2)--(3,2)--(3,1)--(4,1)--(4,0)--cycle;
\draw (.5,.5) node{$6$};
\draw (1.5,.5) node{$4$};
\draw (2.5,.5) node{$3$};
\draw (3.5,.5) node{$1$};
\draw (.5,1.5) node {$4$};
\draw (1.5,1.5) node{$2$};
\draw (2.5,1.5) node{$1$};
\draw (.5,2.5) node{$1$};
\end{tikzpicture}
\end{center}
We see that $\lambda$ is \emph{not} an $a$-core for $a\in \{1,2,3,4,6\}$;
but \emph{is} an $a$-core for all other $a$.
\end{example}

The study of simultaneous core partitions began with the work of Anderson. \cite{anderson}, who proved that when $a$ and $b$ are relatively prime, there $\frac{1}{a+b}\binom{a+b}{a}$ of them.  Her proof uses the abacus to get a bijection with rational Dyck paths.   The study of simultaneous core partitions has exploded in recent years, largely due to the work of Amstrong, Hanusa and Jones \cite{AHJ} relating them to rational Catalan combinatorics.

One direction of this study initiated by Amdeberhan \cite{Amdeberhan}, is to study simultaneous core partitions that are simultaneously core for more than two integers, and simultaneous core partitions that satisfy more classical conditions on partitions.  In \cite{Amdeberhan} made many conjectures about counts of such simultaneous core partitions, which now seem to all be proven.  Our first result is a general theorem that gives a unified recursion that proves many of Amdeberhan's conjectures.  

The two most important examples for us are the following.  First, Amdeberhan conjectured that the number of $(n, n+1)$-core partitions with distinct parts was the $n+1$st Fibonacci number $F_{n+1}$; this was proven independently by Xiong \cite{XiongDistinct} and Straub \cite{Straub}.

Second, Amdeberhan conjectured a formula for the number $P_d(n)$ of $(n, n+1)$-core partition all of whose parts are divisible by $d$, which was proven by Zhou \cite{ZhouRaney}.  Specifically, if $n=qd+r$ with $0\leq r<d$, then 
$$P_d(n)=\frac{r+1}{n+1}\binom{n+q}{n}$$
which are known as Raney or Fuss-Catalan numbers, $P_d(n)=A_q(d-1, r-1)$ in the notation in wikipedia. 

Our perspective is to unify both of these conditions (having distinct parts, and having all parts divisible by $d$) into the general family of conditions given by restricting the difference between consecutive parts of the partitions.  Let $\mathbb{N}=\{0,1,2,\dots\}$ be the natural numbers, and let $\mathcal{P}$ denote the set of all partitions. Let $\ell(\lambda)$ be the number of parts, and define $\lambda_{\ell+1}=0$ by convention.  We denote the empty partition by $0$.

\begin{definition}
Let $M\subset\mathbb{N}$.  We define
  $$\mathcal{P}^M=\{\lambda\in\mP :  \lambda_i-\lambda_{i+1}\in M \text{ for } 1\leq i \leq\ell\-1\}$$
  $$\mathcal{Q}^M=\{\lambda\in\mP :  \lambda_i-\lambda_{i+1}\in M \text{ for } 1\leq i \leq \ell\}$$
That is, $\mP^M$ is the set of partitions where the difference between consecutive parts is in $M$, and $\mQ^M\subset\mP^M$ asks that in addition that the smallest part $\lambda_\ell$ is in $M$.
\end{definition}

From our perspective, $\mQ^M$ is more natural, and will be focused on that. For most choices of $M$, $\mP^M$ and $\mQ^M$ are very unnatural sets of partitions; we highlight a few choices of $M$ that give more familiar classes of partitions.

\begin{example} Let $\N_+=\{1,2,\dots\}$.  Then $\mP^{\N_+}=\mQ^{\N_+}$ is the set of partitions with distinct parts.
\end{example}

\begin{example}
For $k>1$ an integer, let $k\N=\{0,k,2k,\dots \}$.  Then $\mathcal{P}^{k\N}$ is the set of partitions with all parts congruent modulo $k$, and $\mQ^{k\N}$ is the set of partitions with all parts divisible by $k$.
  In particular, $\mP^{2\N}\setminus \mQ^{2\N}\cup {0}$ is the set of partitions with odd parts.
\end{example}
\begin{example}
Combining the first two, taking $M=k\N_+=\{k,2k,3k,\dots\}$, then $\mQ^{k\N_+}$ consists of partitions with distinct parts, all of which are divisible by $k$, and $\mP^{k\N_+}$ consists of partitions with distinct parts, all of which are congruent mod $k$.
\end{example}

\begin{example} Let $\N+r=\{r,r+1,r+2,\dots\}$ denote the set of integers great than or equal to $r$.  Then $\mP^{\N+r}$ is the set of partitions with difference between consecutive parts at least $r$, and $\mQ^{\N+r}$ asks, in addition, that the smallest part is at least $r$.  When $r=2$, these two classes of partitions take part in the Rogers-Ramanujan identities:
\begin{align*}
  \sum_{\lambda\in \mP^{\N+2}} q^{|\lambda|}&=\prod_{m\geq 0} \frac{1}{(1-q^{5m+1})(1-q^{5m+4})} \\
  \sum_{\lambda\in \mQ^{\N+2}} q^{|\lambda|}&=\prod_{m\geq 0} \frac{1}{(1-q^{5m+2})(1-q^{5m+3})}
  \end{align*}
  
\end{example}

\begin{example} \label{ex:rdistinct}
  Let $[r]=\{0,1,\dots, r\}$.  Then $\lambda\in\mQ^{[r]}$ if and only if $\lambda^T$, the conjugate of $\lambda$, has no parts repeating more than $r$ times.  These partitions take part in Glaisher's theorem:
  $$\sum_{\lambda\in\mQ^{[r]}}=\prod_{r+1\nmid m} \frac{1}{1-q^m}$$
in case $r=1$, this is Euler's theorem that the number of partitions of $n$ into distinct parts is equal to the number of partitions of $n$ into odd parts.
\end{example}

Our general theorem is a recurrence for $|\mathcal{C}_{n,n+1}\cap \mQ^M|$ for \emph{any} subset $M$.  The recurrence has a different form depending on whether or not $0\in M$.  

\begin{theorem}
  For $M\subset \N$, let $C^M(n)=|\mathcal{C}_n\cap \mQ^M|$ be the count of simultaneous $(n, n+1)$-core partitions with smallest part and consecutive part differences in $M$.  Then $C^M(0)=1$ (the empty partition), and $C^m(n)$ is determined from this by the recurrence:
  \begin{enumerate}
  \item If $0\in M$,
    $$C^M_{n+1}=\sum_{\substack{k\in M\\k\leq n}}C^M_kC^M_{n-k}$$
  \item If $0\notin M$,
    $$C^M_{n+1}=1+\sum_{\substack{k\in M\\k\leq n}}C^M_{n-k}$$
  \end{enumerate}
\end{theorem}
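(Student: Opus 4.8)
The plan is to set up the $(n,n+1)$-cores via the abacus / $\beta$-set construction reviewed in Section~\ref{sec:recursion}, and to read off the combinatorial data $(\lambda_\ell,\ \lambda_i-\lambda_{i+1})$ directly from the positions of beads. Recall that an $(n,n+1)$-core is determined by its set of first-column hook lengths (equivalently, by an order filter / staircase region), and that Anderson's bijection realizes such cores as lattice paths in an $n\times(n+1)$-type region. The key structural fact I would isolate is a \emph{self-similar decomposition}: given an $(n+1, n+2)$-core $\lambda$ with $\lambda\in\mQ^M$, look at the largest part $\lambda_1$ (or the topmost bead, depending on conventions); removing it and renormalizing splits $\lambda$ into a smaller $(k,k+1)$-core and a $(n-k, n-k+1)$-core, where $k$ records how far the first ``gap'' sits, and the constraint $\lambda_1-\lambda_2\in M$ forces $k\in M$. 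This is exactly the kind of decomposition that produces a convolution recurrence.

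First I would make precise the bijection between $\mathcal{C}_{n,n+1}$ and a set of paths (or Dyck-path-like objects), tracking under this bijection what the conditions ``$\lambda_\ell\in M$'' and ``$\lambda_i-\lambda_{i+1}\in M$ for all $i$'' become — they should translate into the sizes of the maximal ``horizontal runs'' of the path all lying in $M$. Second, I would perform the first-return / first-step decomposition of such a path: split at the first point where the path returns to the diagonal (case $0\in M$) or peel off the initial run and recurse on the remainder (the structure when $0\notin M$ is genuinely different because a zero-length gap is disallowed, so parts cannot repeat and the recursion becomes linear rather than quadratic). Third, I would check the base case $C^M(0)=1$ (only the empty partition is a $(0,1)$-core, trivially in $\mQ^M$) and verify that the decomposition is a bijection, so that the counts satisfy
\begin{enumerate}
\item $C^M_{n+1}=\sum_{k\in M,\ k\leq n} C^M_k C^M_{n-k}$ when $0\in M$;
\item $C^M_{n+1}=1+\sum_{k\in M,\ k\leq n} C^M_{n-k}$ when $0\notin M$, the extra $1$ accounting for the partition whose path never returns (i.e.\ the ``all gaps used up at once'' configuration, or the single staircase core).
\end{enumerate}

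The main obstacle I expect is bookkeeping the translation between the three languages — part differences of $\lambda$, first-column hook lengths, and bead positions on the abacus — carefully enough that the constraint set $M$ is genuinely preserved by the decomposition, and in particular that the renormalization taking an $(n,n+1)$-core to an $(k,k+1)$-core does not distort the gaps. Concretely, one must verify that if $\lambda\in\mathcal{C}_{n+1,n+2}\cap\mQ^M$ then both pieces of the decomposition again satisfy the $\mQ^M$ condition (the ``interface'' difference $\lambda_1-\lambda_2$ being the place where $k\in M$ enters), and conversely that any pair of such smaller cores glues back to a valid core — the $(n,n+1)$-vs-$(n+1,n+2)$ shift is what makes Anderson's region compatible with concatenation. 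Once that compatibility lemma is in hand, both recurrences fall out by summing over the size $k$ of the first block, with the $0\in M$ vs $0\notin M$ dichotomy corresponding exactly to whether a trivial (empty) first block is allowed, which is what collapses the convolution to the linear recurrence in case (2).
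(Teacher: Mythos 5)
Your plan is essentially the paper's proof: translate membership in $\mQ^M$ into the condition that every maximal run of white beads on the abacus has length in $M$, then decompose at the second black bead of the first column (the first return to the diagonal), which gives the convolution when $0\in M$ and collapses to the linear recurrence when $0\notin M$ because two adjacent black beads then force all later beads to be black. The one slip is your identification of the exceptional $+1$ term: it is the empty partition (whose second bead is already black, so the decomposition by a first run of length $k\in M$ does not apply), not ``the partition whose path never returns'' --- that partition is $\lambda=(n)$, which is already counted by the $k=n$ term of the sum.
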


We might term these recurrences the restricted recurrence Catalan and Fibonacci sequences, respectively; when $M=\N$, the recurrence is the standard recurrence for the Catalan numbers; when $M=\N\setminus 0$, the recurrence is equivalent to the familiar formula for the sum of the first $n$ Fibonacci numbers:
$$F_{n+1}=1+\sum_{k=1}^{n-1} F_{n-k}$$
When $M$ is only a subset of these two sets, the recurrence only has those terms corresponding to elements of $M$.

Both identities can be rephrased in terms of generating series, though if $0\in M$ the statement requires the Hadamard product: recall if $f=\sum_{i\geq 0} a_nq^n$ and $g=\sum_{m\geq 0} b_jq^m$ are formal power series, the Hadamard product $(f\star g)=\sum_{n\geq 0} a_nb_bq^n$.
\begin{corollary} For $M \subset \N$, let $$\chi_M=\sum_{m\in M} q^m$$
  $$C^M(q)=\sum_{n\geq 0} C^M_nq^n$$ 

  Then if $0\in M$, we have
    $$C^M(q)=1+q(C_M(q)\star \chi_M(q))C_M(q) $$

  If $0\notin M$, then 
    $$C_M(q)=\frac{1}{1-q}+q\chi_m(q)C_m(q)$$
    and hence
    $$C_m(q)=\frac{1}{1-q}\frac{1}{1-q\chi_m(q)C_m(q)}$$
\end{corollary}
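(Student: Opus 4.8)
The plan is to derive both functional equations as direct translations of the recurrences in the Theorem, since the Corollary is essentially just a generating-function restatement of it. In either case, write $C^M(q) = \sum_{n\ge 0} C^M_n q^n$ and use $C^M_0 = 1$ to split off $C^M(q) = 1 + \sum_{n\ge 0} C^M_{n+1}q^{n+1}$; the whole task then reduces to identifying the generating function of the sequence $(C^M_{n+1})_{n\ge 0}$ from the right-hand side of the appropriate recurrence.

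First I would handle the case $0\notin M$, where the recurrence is $C^M_{n+1} = 1 + \sum_{k\in M,\,k\le n} C^M_{n-k}$. The constant term contributes $\sum_{n\ge 0} q^{n+1} = q/(1-q)$. For the sum, interchanging the order of summation gives $\sum_{k\in M} q^{k+1}\sum_{n\ge k} C^M_{n-k}q^{n-k}$, where the constraint $k\le n$ may be dropped because $C^M_m$ is read as $0$ for $m<0$; this collapses to $q\,\chi_M(q)\,C^M(q)$. Adding the pieces yields $C^M(q) = 1 + q/(1-q) + q\chi_M(q)C^M(q) = \tfrac1{1-q} + q\chi_M(q)C^M(q)$, which is the first displayed identity, and since this equation is now linear in $C^M(q)$, rearranging produces the stated closed form.

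The case $0\in M$ is the one real point of care, because the recurrence $C^M_{n+1} = \sum_{k\in M,\,k\le n} C^M_k C^M_{n-k}$ is a convolution in which the membership condition $k\in M$ is imposed on only one of the two factors. I would encode this by observing that $[k\in M]$ is exactly the coefficient of $q^k$ in $\chi_M(q)$, so the sequence $\big(C^M_k\,[k\in M]\big)_{k\ge 0}$ has generating function $C^M(q)\star\chi_M(q)$. Forming the ordinary Cauchy product of this sequence with $(C^M_m)_{m\ge 0}$ then reproduces exactly $\sum_{n\ge 0}\big(\sum_{k\in M,\,k\le n} C^M_k C^M_{n-k}\big)q^n = \big(C^M(q)\star\chi_M(q)\big)\,C^M(q)$ — again the bound $k\le n$ is automatic — and multiplying by $q$ and restoring the $C^M_0 = 1$ term gives $C^M(q) = 1 + q\big(C^M(q)\star\chi_M(q)\big)C^M(q)$. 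The subtlety to flag, and really the only substantive step, is that although the Hadamard product is not in general compatible with multiplication, here it is applied to a single factor \emph{before} the Cauchy product is taken, so no such compatibility is needed and every manipulation is legitimate at the level of formal power series; one should also remark that the resulting identity determines $C^M(q)$ uniquely by induction on the degree, since the degree-$(n+1)$ coefficient of the right-hand side involves only $C^M_0,\dots,C^M_n$.
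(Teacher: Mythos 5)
Your derivation is correct and is exactly the intended argument: the paper states this corollary without a separate proof, as a direct generating-function translation of Theorem 1, and your handling of the Hadamard product (applied to the single factor carrying the condition $k\in M$ \emph{before} taking the Cauchy product) is precisely the right way to encode the one-sided membership constraint. One small caveat: solving the linear equation in the $0\notin M$ case yields $C^M(q)=\frac{1}{(1-q)(1-q\chi_M(q))}$, so the extra factor of $C_M(q)$ inside the denominator of the paper's final display is evidently a typo, and your claim that ``rearranging produces the stated closed form'' should be understood as producing this corrected form rather than the one literally printed.
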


Before we give the proof, we illustrate some examples of this theorem.

\begin{corollary}Let $D(n)$ denote the number of $(n, n+1)$-cores with distinct parts, then $D(n)=|\mathcal{C}_n\cap \mQ^{\N\setminus 0}|$, and so we have
  $$D(n+1)=1+\sum_{k=1}^n D(n-k)$$
  In particular, for $n=0$ the sum on the right hand side is empty, and so $D(1)=1$, and otherwise, removing the first term of the sum and reindexing gives
  $$D(n+1)=1+\sum_{k=1}^n D(n-k)=D(n-1)+[1+\sum_{k=1}^{n-1} D(n-1-k)]=D(n-1)+D(n)$$
  and so $D(n)=F(n)$, the Fibanocci numbers.

\end{corollary}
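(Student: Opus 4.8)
The plan is to obtain this as a direct specialization of the Theorem, once we pin down the subset $M$ that encodes ``distinct parts.'' First I would check the identification $D(n) = |\mathcal{C}_n \cap \mQ^{\N\setminus 0}|$. A partition $\lambda$ has distinct parts exactly when $\lambda_1 > \lambda_2 > \cdots > \lambda_\ell > 0$, i.e. $\lambda_i - \lambda_{i+1} \geq 1$ for $1 \le i \le \ell-1$ together with $\lambda_\ell \geq 1$; since $\lambda_{\ell+1} = 0$ by convention, the condition $\lambda_\ell \geq 1$ is itself a condition on the difference $\lambda_\ell - \lambda_{\ell+1}$, so all of $\lambda_1-\lambda_2,\dots,\lambda_{\ell-1}-\lambda_\ell,\lambda_\ell$ lie in $M := \N\setminus 0 = \N_+$. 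This is precisely the defining condition of $\mQ^{\N_+}$, which is the observation $\mP^{\N_+} = \mQ^{\N_+}$ from the example above. Hence $D(n) = C^{\N\setminus 0}_n$.

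Since $0 \notin \N\setminus 0$, case (2) of the Theorem applies. The index set $\{k \in M : k \le n\}$ is $\{1,2,\dots,n\}$, so the recurrence reads $D(n+1) = 1 + \sum_{k=1}^n D(n-k)$ with $D(0) = C^M(0) = 1$; for $n = 0$ the sum is empty, giving $D(1) = 1$.

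It then remains to massage this into the Fibonacci recurrence. For $n \geq 1$, I would split off the $k = 1$ term and reindex the remainder by $j = k-1$:
$$D(n+1) = 1 + D(n-1) + \sum_{k=2}^n D(n-k) = D(n-1) + \Bigl(1 + \sum_{j=1}^{n-1} D\bigl((n-1)-j\bigr)\Bigr) = D(n-1) + D(n),$$
where the final equality is the recurrence itself applied with $n$ replaced by $n-1$. Combined with $D(0) = D(1) = 1$ this is the Fibonacci recurrence, so $D(n) = F_{n+1}$, recovering the Amdeberhan--Xiong--Straub result. Honestly there is no real obstacle here beyond the bookkeeping: all the content sits in the Theorem, and the only points requiring any care are the correct translation of ``distinct parts'' into $M = \N\setminus 0$ (in particular remembering that $\lambda_\ell$ counts as a difference under the $\lambda_{\ell+1}=0$ convention), the empty-sum base case, and the off-by-one in the Fibonacci indexing.
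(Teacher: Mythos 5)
Your proposal is correct and follows essentially the same route as the paper: identify distinct parts with $M=\N\setminus 0$, invoke case (2) of the Theorem, and convert the resulting sum into the Fibonacci recurrence by splitting off the $k=1$ term and reindexing. The only (harmless) difference is that you are more careful about the Fibonacci indexing convention than the corollary's final line.
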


\begin{corollary}Let $D_r(n)$ denotes the number of $(n,n+1)$-core with no part occuring more than $r$-times; by Example \ref{ex:rdistinct} we have $D_r(n)=|\mathcal{C}_n\cap \mQ^{[r]}|$, and hence  
$$D_{r}(n+1)=\sum_{k=0}^r D_r(k)D_r(n-k)$$
For $n\leq r$, this is the usual recurrence for Catalan numbers, and so in this range we have $D_r(n)=C_n$.  
If we take $r=1$, this directly recovers the standard Fibonacci recurrence for simultaneous cores with distinct parts, since $D_1(0)=D_1(1)=1$.  In general, since the recursion does not differ from the standard Catalan recursion until $n>r$, we have $D_r(n)=C_n$ for $n\leq r$, and hence in general $D_r(n+1)=\sum_{k=0}^r C_k D_r(n-k)$.  Taking $r=2$, we have
$$D_2(n+1)=D_2(n)+D_2(n-1)+2D_2(n-2)$$

\end{corollary}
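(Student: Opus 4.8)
The plan is to recognise $D_r(n)$ as $C^{[r]}(n)$ in the notation of the Theorem, and then read off every assertion of the corollary from case~(1) of that recurrence. The first step is the identification $D_r(n)=|\mathcal{C}_n\cap\mQ^{[r]}|$, for which I would combine two standard facts: conjugation $\lambda\mapsto\lambda^T$ is an involution on partitions that preserves the multiset of hook lengths, so it restricts to an involution on $\mathcal{C}_n$; and Example~\ref{ex:rdistinct} says that $\lambda\in\mQ^{[r]}$ precisely when $\lambda^T$ has no part occurring more than $r$ times. Composing, $\lambda\mapsto\lambda^T$ is a bijection from $\mathcal{C}_n\cap\mQ^{[r]}$ onto the set of $(n,n+1)$-cores with no part repeated more than $r$ times, which is exactly what $D_r(n)$ counts.

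Next I would apply the Theorem with $M=[r]=\{0,1,\dots,r\}$. Since $0\in[r]$ we land in case~(1), so with $C^{[r]}(0)=1=D_r(0)$ the recurrence reads
$$D_r(n+1)=\sum_{\substack{k\in[r]\\k\leq n}}D_r(k)\,D_r(n-k).$$
When $n\geq r$ the constraint $k\leq n$ is automatic on $[r]$, giving the displayed formula $D_r(n+1)=\sum_{k=0}^r D_r(k)D_r(n-k)$. When $n<r$ the sum instead runs over $k=0,\dots,n$, which is precisely the Catalan convolution $C_{n+1}=\sum_{k=0}^n C_kC_{n-k}$; together with $D_r(0)=1=C_0$, a strong induction on $n$ gives $D_r(n)=C_n$ for all $n\leq r$. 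For $r=1$ this specialises to $D_1(0)=D_1(1)=1$ with the recurrence $D_1(n+1)=D_1(n)+D_1(n-1)$, recovering the previous corollary.

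For the remaining claims I would feed the values just computed back into the recurrence. Since $\sum_{k=0}^r D_r(k)D_r(n-k)$ only ever references $D_r(k)$ with $k\leq r$, where $D_r(k)=C_k$, it equals $\sum_{k=0}^r C_kD_r(n-k)$ with no further hypothesis. Specialising to $r=2$ and using $C_0=1$, $C_1=1$, $C_2=2$ then yields $D_2(n+1)=D_2(n)+D_2(n-1)+2D_2(n-2)$.

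I do not expect a genuine obstacle here: all the substance is contained in the Theorem, and the only points needing care are the bookkeeping around whether the constraint $k\leq n$ truncates the sum (the split into $n\geq r$ versus $n<r$) and the observation that conjugation preserves the $(n,n+1)$-core property, which is what makes Example~\ref{ex:rdistinct} applicable to cores.
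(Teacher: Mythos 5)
Your proposal is correct and follows essentially the same route as the paper: the corollary is a direct specialization of the main theorem to $M=[r]$ (case $0\in M$), with the identification $D_r(n)=|\mathcal{C}_n\cap\mQ^{[r]}|$ coming from Example \ref{ex:rdistinct} plus the fact that conjugation preserves hook lengths and hence the core property. Your explicit handling of the $k\leq n$ truncation for $n<r$, and the induction giving $D_r(n)=C_n$ in that range, is exactly the bookkeeping the paper leaves implicit.
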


\begin{corollary}Let $P_d(n)$ denotes the number of $(n, n+1)$-core partitions with all parts divisible by $d$; this is the case $M=\{0,d,2d,3d,\dots\}$.  The recurrence becomes:
$$P_d(n+1)=\sum_{k=0}^{\lfloor n/d\rfloor} P_d(dk)P_d(n-dk)$$
  which is Riordan's recurrence for the Fuss-Catalan numbers. In particular, in this case the Theorem reproves Amdeberhan's conjecture that, if $n=qd+r$ with $0\leq r<d$, then
  $$P_d(n)=\frac{r+1}{n+1}\binom{n+q}{q}$$
\end{corollary}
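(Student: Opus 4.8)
\emph{Plan.} The strategy is to read the recurrence off the Theorem and then recognise it as Riordan's recurrence for the Fuss--Catalan numbers. First I would specialise the Theorem to $M=d\N=\{0,d,2d,\dots\}$. Since $0\in M$, case (1) applies; the example identifying $\mQ^{d\N}$ with the partitions all of whose parts are divisible by $d$ gives $C^M_n=P_d(n)$, and the condition ``$k\in M$, $k\le n$'' is precisely ``$k=dj$ with $0\le j\le\lfloor n/d\rfloor$'', so case (1) becomes the displayed recurrence. Together with $P_d(0)=1$ this determines $P_d$, since every argument on the right-hand side is $\le n$. One could now simply invoke the known solution of Riordan's recurrence; I will instead sketch a self-contained derivation.

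Next I would translate to generating functions. Put $A(x)=\sum_{n\ge0}P_d(n)x^n$ and let $A_{(d)}(x)=\sum_{k\ge0}P_d(dk)x^{dk}$ be the $d$-section of $A$. The coefficient of $x^n$ in $A_{(d)}(x)A(x)$ is $\sum_{dj\le n}P_d(dj)P_d(n-dj)$, so the recurrence is equivalent to $\big(A(x)-1\big)/x=A_{(d)}(x)A(x)$, i.e.
$$A(x)=\frac{1}{1-xA_{(d)}(x)}.$$
To solve this, let $B=B(z)$ be the $(d{+}1)$-ary Fuss--Catalan series $B=1+zB^{d+1}$, abbreviate $\beta=B(x^d)$, and set $A(x)=1/(1-x\beta)=\sum_{j\ge0}x^j\beta^j$. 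Since $\beta$ involves only exponents divisible by $d$, the term $x^j\beta^j$ lies in residue class $j\bmod d$, so the $d$-section of $A$ is $\sum_{i\ge0}(x^d\beta^d)^i=1/(1-x^d\beta^d)$; the relation $x^d\beta^{d+1}=\beta-1$ gives $1-x^d\beta^d=1/\beta$, hence $A_{(d)}=\beta$, and then $1+xA_{(d)}A=1+x\beta/(1-x\beta)=A$. By uniqueness this $A$ is the generating function of $P_d$.

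Finally I would extract coefficients. Grouping $\sum_j x^j\beta^j$ by $j\bmod d$ gives $A(x)=\sum_{r=0}^{d-1}x^r\beta^{r+1}$, whence $P_d(qd+r)=[z^q]B(z)^{r+1}$. Standard Lagrange inversion applied to $B=1+zB^{d+1}$ yields the Raney number $[z^q]B^{r+1}=\frac{r+1}{(d+1)q+r+1}\binom{(d+1)q+r+1}{q}$, and the identity $\binom{(d+1)q+r+1}{q}=\frac{(d+1)q+r+1}{dq+r+1}\binom{(d+1)q+r}{q}$ (a case of $\binom{N+1}{q}=\frac{N+1}{N+1-q}\binom{N}{q}$) rewrites this as $\frac{r+1}{dq+r+1}\binom{(d+1)q+r}{q}=\frac{r+1}{n+1}\binom{n+q}{q}$ with $n=qd+r$, which is Amdeberhan's formula. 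The step I expect to be the main obstacle is pure bookkeeping: making the $d$-section manipulation and the Fuss--Catalan/Raney conventions line up exactly --- equivalently, if one prefers to cite Riordan directly, matching his normalisation and checking the base values $P_d(0)=\cdots=P_d(d-1)=1$. All of the partition-theoretic content already sits in the Theorem; this corollary is a computation on top of it.
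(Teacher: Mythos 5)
Your specialisation of the Theorem is exactly what the paper does: $M=d\N$ contains $0$, so case (1) applies, and the sum over $k\in M$, $k\le n$ becomes the sum over $k=dj$ with $0\le j\le\lfloor n/d\rfloor$, yielding the displayed recurrence. Where you diverge is in what happens next: the paper stops there, simply recognising the result as Riordan's recurrence for the Fuss--Catalan numbers and citing the known closed form (already established by Zhou), whereas you supply a complete self-contained derivation --- passing to the functional equation $A=1+xA_{(d)}A$ via the $d$-section, verifying that $A=1/(1-xB(x^d))$ with $B=1+zB^{d+1}$ solves it (the key identity $1-x^d\beta^d=1/\beta$ is correct), extracting $P_d(qd+r)=[z^q]B^{r+1}$ from the regrouping $A=\sum_{r=0}^{d-1}x^r\beta^{r+1}$, and finishing with Lagrange inversion plus the binomial identity $\binom{N+1}{q}=\frac{N+1}{N+1-q}\binom{N}{q}$ to land on $\frac{r+1}{n+1}\binom{n+q}{q}$. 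I checked the arithmetic in the last step and it is right ($n+1=dq+r+1$ and $n+q=(d+1)q+r$). Your route costs a page of generating-function bookkeeping but makes the corollary independent of any external reference; the paper's route is a one-line citation. Both are valid, and your uniqueness argument (the recurrence with $P_d(0)=1$ determines the sequence, and your candidate satisfies it) correctly closes the loop that a bare citation would leave implicit.
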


\begin{corollary} Let $D_d(n)$ denote the number of $(n, n+1)$-core partitions with distinct parts, all divisible by $d$; this is the case $M=\{d,2d,3d,\dots\}$. Let $D_d(q)$ be the corresponding generating function.  We have $\chi_M(q)=q^d/(1-q^d)$, and a little algebra gives
  $$D_d(q)=\frac{1+q+q^2+\cdots +q^{d-1}}{1-q^d-q^{d+1}}$$
 and hence $D_d(n)$ is the series determined by the recurrence $D_d(n)=D_d(n-d)+D_d(n-d-1)$ and the initial values $D_d(0)=D_d(1)=\cdots=D_d(d-1)=1$; when $d=2$ this is $1,1,1,2,2,3,4,5,7,9,12,16,\dots$ which up to reindexing is \href{https://oeis.org/A000931}{A000931} the Padovan numbers.

\end{corollary}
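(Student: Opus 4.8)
The plan is to specialize the $0\notin M$ case of the Theorem (equivalently, the corresponding generating‑function identity in the Corollary) to $M=d\N_+=\{d,2d,3d,\dots\}$ and then read off a linear recurrence from the resulting rational function. By the Example on $k\N_+$, a partition lies in $\mQ^{d\N_+}$ precisely when it has distinct parts all divisible by $d$, so $D_d(n)=C^{d\N_+}(n)$ and we are in the case $0\notin M$.

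First I would record $\chi_M(q)=\sum_{j\geq 1}q^{jd}=q^d/(1-q^d)$. Summing the recurrence in case (2) of the Theorem (equivalently, invoking the Corollary) gives the functional equation
$$C^{d\N_+}(q)=\frac{1}{1-q}+q\,\chi_M(q)\,C^{d\N_+}(q),$$
which is \emph{linear} in $C^{d\N_+}(q)$ --- unlike the Hadamard‑product equation in the $0\in M$ case --- and so solves at once to $C^{d\N_+}(q)=\frac{1}{(1-q)(1-q\chi_M(q))}$. Substituting $q\chi_M(q)=q^{d+1}/(1-q^d)$ turns $1-q\chi_M(q)$ into $(1-q^d-q^{d+1})/(1-q^d)$, and then $\frac{1-q^d}{1-q}=1+q+\cdots+q^{d-1}$ yields
$$D_d(q)=\frac{1-q^d}{(1-q)(1-q^d-q^{d+1})}=\frac{1+q+\cdots+q^{d-1}}{1-q^d-q^{d+1}},$$
which is the asserted closed form.

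From here I would clear the denominator, so that $D_d(q)(1-q^d-q^{d+1})$ equals the polynomial $1+q+\cdots+q^{d-1}$, and compare coefficients of $q^n$, using the convention $D_d(m)=0$ for $m<0$. For $0\leq n\leq d-1$ the left side contributes only $D_d(n)$ and the right side is $1$, giving the initial values $D_d(0)=\cdots=D_d(d-1)=1$; for $n\geq d$ the right side vanishes, giving $D_d(n)=D_d(n-d)+D_d(n-d-1)$ (the case $n=d$ using $D_d(-1)=0$). Finally, for $d=2$ this recurrence together with $D_2(0)=D_2(1)=1$ produces $1,1,1,2,2,3,4,5,7,9,12,16,\dots$, which agrees with \href{https://oeis.org/A000931}{A000931} up to a shift of index, identifying $D_2(n)$ with the Padovan numbers.

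I do not expect a genuine obstacle: this really is the promised ``little algebra.'' The only places calling for a moment's care are (i) observing that the $0\notin M$ functional equation is linear, so no quadratic‑formula manipulation is needed, and (ii) handling the boundary coefficient $n=d$ and the negative‑index convention correctly when passing from the rational function to the stated recurrence and initial conditions --- and then matching the resulting sequence against the OEIS offset for the Padovan numbers.
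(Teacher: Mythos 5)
Your proposal is correct and follows exactly the route the paper intends: specialize the $0\notin M$ generating-function identity to $M=d\N_+$, solve the linear functional equation, simplify using $\frac{1-q^d}{1-q}=1+q+\cdots+q^{d-1}$, and extract the recurrence and initial values by comparing coefficients. This is precisely the ``little algebra'' the paper leaves to the reader (and you have tacitly used the correct solved form $C^M(q)=\frac{1}{(1-q)(1-q\chi_M(q))}$, fixing an evident typo in the paper's displayed formula).
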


Finally, Let $O(n)$ and $E(n)$ be the number of $(n,n+1)$ cores with all parts odd or even, respectively.  Then $E(n)=P_2(n)$ is already determined by the previous recurrence, and analysis of the abacus allows us to derive the formula
$$E(n+2)=2O(n)-O(n-2)$$
and thus determine $O(n)$; our derivation of this last expression is, at present, not as elegant as the expression seems to merit.

\section{The abacus construction}
In this section we briefly recall the abacus construction for $n$-core partitions.

Let $w(\lambda)=a_1a_2a_3a_4\cdots$ be the boundary path of $\lambda$, with each $a_i$ being either $E$ or $S$, normalized so $a_1=E$ is the first $E$ step, and padded out with trailing $E$s if necessary.  Then $\lambda$ is an $n$ core if and only if $a_k=E$ implies that $a_{k+n}=E$.  

The general idea behind the $n$-abacus construction is to split the $a_i$ among $n$ rows periodically.  If we write $i=kn+r$ with $0\leq k, 1\leq r\leq n$, then we will put a black circle at the $r$th row and $k$th column if $a_i=E$, and a white circle in that spot if $a_i=S$.  Note that this means our rows start with 1 but our columns start with 0.

On the $n$-abacus, the condition for being an $n$-core translate to once a spot on a given row is black, every later spot on that row must also be black.  Our choice of normalization means that for $n$-cores, the first row will always be filled entirely with black dots.    

Thus, we can record the $n$-core as a function $f:\{0,1,\dots, n-1\}\to \mathbb{N}$ by letting $f(i)$ be the first column where the $i$th row has a black dot.  Note that we always have $f(0)=0$, giving a bijection from $n$-cores to $\N^{n-1}$, but including this value will prove useful for describing $(n, n+1)$-cores.

The condition to see an $n+1$-core on the $n$ abacus is that if the circle on the $i$th row and $j$th column is black, with $i<n$, then the circle at the $i+1$st row and $j+1$st column is also black.  In terms of the function $f$, this condition is $f(i+1)\leq f(i)+1$.    In addition to this, if the bead on the $n$th row (last row) and $j$th column is filled, then so must be the bead on the 1st row and $j+2$ column.  Since we only work with $n$-cores, the first row is always completely filled, and this condition is vacuous.  This is a large part of what makes $(n, n+1)$-cores easier to handle than general $(a,b)$-cores.

Summarizing, we have shown the following:
\begin{lemma}The set of $(n, n+1)$-core partitions are in bijection with functions $f:\{0,\dots, n-1\}\to \mathbb{N}$ satisfying:
\begin{enumerate}
\item $f(0)=0$
\item $f(k+1)\leq f(k)+1$
\end{enumerate}
\end{lemma}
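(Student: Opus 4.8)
The plan is to realise the claimed bijection as a composition of three correspondences, all of whose translation rules have been recorded in the discussion above: an $(n,n+1)$-core $\lambda$, its normalized boundary word $w(\lambda)=a_1a_2\cdots$, the associated $n$-abacus configuration, and finally the function $f$. Every rule relating these objects has been stated; the task of the proof is to verify that each arrow is a genuine bijection and that conditions (1) and (2) capture precisely the $n$-core and $(n+1)$-core properties.

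First I would make the boundary-word correspondence precise. The map $\lambda\mapsto w(\lambda)$, obtained by reading the profile of the Young diagram as a sequence of $E$ and $S$ steps, discarding the leading $S$ steps so that $a_1$ is the first $E$, and padding with trailing $E$s, is the classical bijection between partitions and words over $\{E,S\}$ that begin with $E$ and are eventually constantly $E$; I would recall that it is invertible, the partition being recovered from the step sequence in the standard way. Granting the stated hook-length criterion, that $\lambda$ is an $n$-core exactly when $a_k=E$ forces $a_{k+n}=E$, I would distribute the entries of $w(\lambda)$ cyclically onto the $n$-abacus, placing $a_i$ with $i=kn+r$ in the $r$th row and $k$th column. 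The $n$-core condition becomes the statement that each row is black from some column onward, which is exactly what makes $f(i):=\min\{k:\text{the }i\text{th row is black in column }k\}$ a well-defined element of $\mathbb{N}$; the normalization $a_1=E$ forces the distinguished first row to be entirely black, giving $f(0)=0$, which is (1). Invoking the recorded fact that this recording is a bijection from $n$-cores onto $\mathbb{N}^{n-1}$, I obtain a bijection between $n$-cores and functions $f$ satisfying only (1).

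Next I would cut down to the $(n+1)$-cores. Using the criterion recorded above, that a black bead in row $i<n$ and column $j$ forces one in row $i+1$ and column $j+1$ while the wrap-around condition is vacuous for $n$-cores, I would translate ``black from column $f(i)$ onward'' together with this implication into the single inequality $f(i+1)\le f(i)+1$, which is (2). The necessity of (2) is immediate from this translation; the substance is the converse, namely that every $f$ satisfying (1) and (2) actually comes from an $(n,n+1)$-core, and it is here that the correspondence is upgraded from an injection to a bijection onto the stated set.

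The one point requiring genuine care, and what I expect to be the main obstacle, is precisely this surjectivity: one must check that, starting from an arbitrary $f$ with $f(0)=0$ and $f(k+1)\le f(k)+1$, filling in the abacus row by row and reading off the corresponding boundary word produces a word that is simultaneously (i) a legitimate partition profile, namely beginning with $E$ and eventually constantly $E$, (ii) an $n$-core, and (iii) an $(n+1)$-core, with no hidden constraint introduced by the periodic splitting of the word across the $n$ rows. Once this reconstruction is verified to land inside $\mathcal{C}_{n,n+1}$ for every such $f$, composing the three bijections yields the lemma.
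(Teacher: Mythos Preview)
Your proposal is correct and follows essentially the same route as the paper: the lemma is stated as a summary of the preceding abacus discussion, and you are simply making that discussion's chain of correspondences (partition $\to$ boundary word $\to$ $n$-abacus $\to$ function $f$) explicit, together with the translation of the $n$-core and $(n+1)$-core conditions into properties (1) and (2). Your emphasis on verifying surjectivity is a point the paper leaves implicit in the phrases ``giving a bijection from $n$-cores to $\N^{n-1}$'' and ``this condition is $f(i+1)\leq f(i)+1$'', but once those equivalences are granted the bijection restricts automatically, so no new idea is needed beyond what the paper records.
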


\section{Proof of the general theorem}

\subsection{Main ideas} Proving theorem \ref{ } depends on two things: understanding how the standard Catalan number recurrence is proven in terms of the abaci functions, and understanding how to spot whether or not an $n$-core is in $\mQ^M$ from its abaci.

The standard Catalan recurrence is often proven in terms of decomposing Dyck paths by the first return to the diagonal.  There is a natural bijection from abaci functions to Dyck paths -- $f(k)$ measures how far beneath the line $y=x$ the Dyck path is at the $k$th step.  Thus, we can prove the standard recurrence for Catalan numbers by decomposing the abaci functions by their first nontrivial 0.

\begin{tikzpicture}[scale=.8]
\foreach \x in {0,1,...,3}           
\foreach \y in {0,1,...,7}
\draw (\x,\y) circle (.4);
\begin{scope}
  \clip (-.5,7.5)--(-.5,6.5)--(.5,6.5)--(.5,5.5)--(1.5,5.5)--(1.5,4.5)--(-.5,4.5)--(-.5,1.5)--(.5,1.5)--(.5,-.5)--(3.5,-.5)--(3.5,7.5)--(-.5,7.5);
\foreach \x in {0,1,...,3}           
\foreach \y in {0,1,...,7}
\filldraw (\x,\y) circle (.4);
\end{scope} 
\begin{scope}[shift={(-.5,8.5)}, yscale=-1]
\foreach \i in {1,...,8}
 \node at (0,\i) {\i};
\end{scope}
\node at (.5,7.5) {9};
\node at (.5,6.5) {10};
\node at (.5,5.5) {11};
\begin{scope}[xshift=5cm]
  \node at (0,7) {$f(0)=0$};
  \node at (0,6) {$f(1)=1$};
  \node at (0,5) {$f(2)=2$};
  \node at (0,4) {$f(3)=0$};
  \node at (0,3) {$f(4)=0$};
  \node at (0,2) {$f(5)=0$};
  \node at (0,1) {$f(6)=1$};
  \node at (0,0) {$f(7)=1$};
  \end{scope}

\begin{scope}[xshift=7cm]
  \draw[very thin, gray] (0,0) grid (8,8);
  \draw[very thin, dashed, gray] (0,0)--(8,8);
  \draw[ultra thick](0,0)--(3,0)--(3,3)--(4,3)--(4,4)--(5,4)--(5,5)--(7,5)--(7,6)--(8,6)--(8,8);
  \node at (-.2,.2) {0};
  \node at (.8,1.2) {1};
  \node at (1.8,2.2) {2};
  \node at (2.8,3.2) {0};
  \node at (3.8,4.2) {0};
  \node at (4.8,5.2) {0};
  \node at (5.8,6.2) {1};
    \node at (6.8,7.2) {1};

\end{scope}

\begin{scope}[yshift=10cm]
\draw[very thin, gray]  (0,0)  grid (7,6);
\draw[ultra thick] (0,5)--(1,5)--(1,3)--(4,3)--(4,1)--(6,1)--(6,0);
\node at (.5,5.2) {1};
\node at (1.2, 4.5) {2};
\node at (1.2, 3.5) {3};
\node at (1.5,3.2) {4};
\node at (2.5,3.2) {5};
\node at (3.5,3.2) {6};
\node at (4.2, 2.5) {7};
\node at (4.2, 1.5) {8};
\node at (4.5,1.2) {9};
\node at (5.5,1.2) {10};
\node at (6.2, .5) {11};

\end{scope}

\end{tikzpicture}

To recognize whether or not the partition corresponding to a given abacus is in $\mQ^M$, we must discuss how to recognize consecutive part differences $\lambda_i-\lambda_{i+1}$ in terms of the abacus.  

\begin{lemma} \label{lem:rubbeads} An $n$-core partitions $\lambda$ is in $\mQ^M$ if and only if the length of every run of white beads in a column is in $M$.
  \end{lemma}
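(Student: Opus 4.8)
We need to show: an $n$-core partition $\lambda$ lies in $\mathcal{Q}^M$ if and only if every maximal run of consecutive white beads appearing within a single column of the $n$-abacus has length belonging to $M$. The plan is to translate both sides into the language of the boundary path $w(\lambda) = a_1 a_2 a_3 \cdots$ and then read everything off the abacus, which distributes the $a_i$ across $n$ rows by the rule $i = kn + r \mapsto (\text{row } r, \text{column } k)$.

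**Step 1: Part differences as gaps between $S$-steps in the boundary word.** First I would recall the standard dictionary between a partition and its boundary path. Reading the path $w(\lambda)$, each $S$ step corresponds to a part of $\lambda$, and the number of $E$ steps strictly between the $i$-th and $(i+1)$-st $S$ steps (counting from the outside in, so from large parts to small, or whichever orientation the paper's normalization fixes) equals $\lambda_i - \lambda_{i+1}$; the number of $E$ steps before the first $S$ step (after the normalization that makes $a_1 = E$) records either the smallest part $\lambda_\ell$ or, with the $\lambda_{\ell+1}=0$ convention, is handled uniformly. So the condition $\lambda \in \mathcal{Q}^M$ — namely $\lambda_i - \lambda_{i+1} \in M$ for $1 \le i \le \ell$ with $\lambda_{\ell+1}=0$ — is exactly the statement that \emph{every maximal block of consecutive $E$ steps lying between two $S$ steps, or between an $S$ step and the start, has length in $M$.} The one subtlety is the very first block and the padding by trailing $E$s; I would check that the normalization $a_1 = E$ together with the convention $\lambda_{\ell+1} = 0$ makes the "smallest part in $M$" condition of $\mathcal{Q}^M$ (as opposed to $\mathcal{P}^M$) fall out as the run of $E$s preceding the first $S$, with the trailing $E$s correctly not forming a spurious run (they come after the last $S$, so they are not counted — this is exactly why $\mathcal{Q}$ rather than $\mathcal{P}$ appears, modulo the orientation).

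**Step 2: Columns of the abacus versus blocks of the boundary word.** The key observation is that the $n$-abacus is built by cutting $w(\lambda)$ into consecutive windows of length $n$ and stacking them as columns: column $k$ holds $a_{kn+1}, \ldots, a_{(k+1)n}$ read down the rows, with $E \mapsto$ black bead and $S \mapsto$ white bead. A maximal run of white beads \emph{within a single column} is therefore a maximal run of $S$ steps within one length-$n$ window — but here I must be careful, because a run of $S$ steps in $w(\lambda)$ could be split across a column boundary. This is where the $n$-core hypothesis does the work: the lemma's statement is about runs of white beads in a column, and I claim that for an $n$-core the "runs of white beads appearing in columns" are in natural bijection with the blocks of $E$ steps of $w(\lambda)$ between consecutive $S$ steps. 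The cleanest route: recall (from the abacus description already in the paper) that for an $n$-core, once a row turns black it stays black, and the first row is entirely black; deduce that the white beads in column $k$, row $r$, correspond to positions where $f(r) > k$, and that a part difference $\lambda_i - \lambda_{i+1} = d$ shows up on the abacus precisely as a vertical run of $d$ white beads within one column (the $E$ steps separating two $S$ steps occupy $d$ consecutive slots, hence — since $d < n$ is forced? no, $d$ need not be $<n$) ...

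**Step 3: Handling runs longer than $n$, and conclusion.** Actually the honest point is that a block of $E$ steps of length $d$ between two $S$ steps need not lie in a single column if $d \ge n$. So the correct reading of "length of a run of white beads in a column" must be: we look at each column, find its maximal vertical white runs, and additionally — because of how $(n,n+1)$-cores sit on the $n$-abacus via $f(k+1) \le f(k)+1$ — such long runs cannot occur. I would make this precise by using the bijection of the previous lemma: an $(n,n+1)$-core is a function $f$ with $f(0)=0$ and $f(k+1) \le f(k)+1$, and I would show directly that the part differences of $\lambda$ are recovered as $f(k) - f(k+1) + 1$ whenever $f(k+1) \le f(k)$ — wait, more carefully, the parts correspond to the "descents" of $f$ (rows $r$ where the bead pattern has a white-then-eventually-black transition), and the difference between consecutive parts equals the gap in row-indices between consecutive such descents, which is exactly the length of a vertical white run in the relevant column. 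So the cleanest proof: (a) identify which $(r,k)$ are white beads in terms of $f$; (b) identify the parts of $\lambda$ with the maximal vertical white runs in the abacus, the $i$-th part difference being the length of the $i$-th such run read in the natural order; (c) observe that with the normalization, the smallest part $\lambda_\ell$ is also realized as such a run (this is the $\mathcal{Q}$, not $\mathcal{P}$, distinction — the bottom run). Then $\lambda \in \mathcal{Q}^M$ iff all these run-lengths lie in $M$, which is the claim.

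**Main obstacle.** The genuine difficulty is not any computation but pinning down the orientation/normalization bookkeeping so that (i) runs of white beads split across column boundaries are correctly accounted for, and (ii) the distinction between $\mathcal{P}^M$ and $\mathcal{Q}^M$ — i.e., whether the extremal run counts — comes out right. I expect that once the precise correspondence "vertical white run of length $d$ in a column $\leftrightarrow$ a block of $E$'s of length $d$ between consecutive $S$'s in $w(\lambda)$" is stated with the paper's conventions, both implications are immediate, but verifying that correspondence (especially the boundary-of-column and the first/last block cases) is where all the care must go.

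\begin{proof}[Proof sketch]
Write $w(\lambda) = a_1 a_2 a_3 \cdots$ for the normalized boundary path. Under the $n$-abacus, $a_i$ sits at row $r$, column $k$ where $i = kn + r$ ($1 \le r \le n$), and $a_i = E$ gives a black bead, $a_i = S$ a white bead. The parts of $\lambda$ are in order-preserving bijection with the $S$-steps of $w(\lambda)$, and for $1 \le i \le \ell$ the difference $\lambda_i - \lambda_{i+1}$ (with $\lambda_{\ell+1} = 0$) equals the number of $E$-steps in the maximal block of $E$'s immediately preceding the $S$-step corresponding to $\lambda_{i+1}$; the run below the last $S$-step is recorded by the trailing $E$'s and is not counted, while the normalization $a_1 = E$ makes the block of $E$'s before the first $S$-step record $\lambda_\ell$ — this is precisely the extra condition built into $\mathcal{Q}^M$ as opposed to $\mathcal{P}^M$. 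Thus $\lambda \in \mathcal{Q}^M$ if and only if every maximal block of $E$-steps of $w(\lambda)$ that is bounded below by an $S$-step has length in $M$.

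It remains to match these blocks with vertical white runs in columns of the abacus. Using the bijection of the previous lemma, an $(n,n+1)$-core corresponds to $f \colon \{0,\dots,n-1\} \to \N$ with $f(0) = 0$ and $f(k+1) \le f(k) + 1$. One checks directly from the construction that the bead at row $r$, column $k$ is white precisely when it lies strictly below the first black bead of its row, and that the maximal vertical white runs in the abacus are, in natural order, in bijection with the maximal $E$-blocks of $w(\lambda)$ bounded below by an $S$-step, matching lengths. The constraint $f(k+1) \le f(k)+1$ guarantees no such block is long enough to straddle a column boundary, so "run of white beads" may be read column by column. Combining with the previous paragraph, $\lambda \in \mathcal{Q}^M$ if and only if the length of every run of white beads in a column lies in $M$.
\end{proof}
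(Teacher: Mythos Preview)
Your proof sketch has the roles of $E$ and $S$ reversed. In the paper's convention the parts of $\lambda$ are the \emph{columns} of the Young diagram, so the $E$-steps (horizontal steps, black beads) of $w(\lambda)$ are in bijection with the parts, and the number of $S$-steps (vertical steps, white beads) between the $i$-th and $(i+1)$-st $E$-step is exactly $\lambda_i - \lambda_{i+1}$. You assert the opposite: that parts correspond to $S$-steps and differences to blocks of $E$'s. This makes the first paragraph of your proof sketch false (in the paper's worked abacus example the partition has six parts but only five $S$-steps, so your claimed bijection already fails), and the second paragraph incoherent --- you end up trying to match ``maximal $E$-blocks'' (runs of \emph{black} beads) with ``vertical white runs'', which cannot work. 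With the correct identification the argument is the paper's one-liner: the runs of white beads between consecutive black beads are precisely the consecutive part differences together with the smallest part $\lambda_\ell$, which is exactly the data that $\mathcal{Q}^M$ constrains.

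Your argument that white runs do not straddle column boundaries is also off. The lemma is stated for $n$-cores, not $(n,n+1)$-cores, so you cannot invoke the function $f$ and the constraint $f(k+1)\le f(k)+1$ from the previous lemma. The correct (and simpler) reason is the one the paper gives: for any $n$-core, the normalization $a_1 = E$ forces the entire first row of the abacus to be black, so every column begins with a black bead and no run of white beads can continue from one column into the next.
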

\begin{proof}
Since the black circles are horizontal steps, and the white circles are vertical steps, the differences between consecutive parts (and the smallest part) are exactly the lengths of the runs of consecutive white beads, that is, the number of white beads between any two consecutive black beads on the abacus.  

Since the first row is all black beads, none of these runs of beads will run across more than one column. 
\end{proof}

The proof of Theorem 1 is simply combining the standard derivation of the Catalan recurrence with Lemma \ref{lem:runbeads}
\subsection{Proof of Theorem 1}
Let $f$ be the abacus function of an $(n+1,n+2)$-core partition.  

The proof is essentially in the diagram above, and the picture as follows:

First, we consider the case $0\notin M$.  First, we argue that in the second column, we already have every bead is black.  Indeed, the top bead of the second column is black because $\lambda$ is an $n$-core, and the second bead of the second column is black because $\lambda$ is an $n+1$-core, but since $0\notin M$ if we ever have two consecutive black beads then all the beads from that point on must be black.  So $\lambda$ is determined by its first column.

Now, consider the first column, although $0\notin M$, there is still one partition where the second bead could also be black -- the empty partition.  Otherwise, the second bead must occur after $k$ white beads, where $k\in M$.  Removing the first $k+1$ rows, the remaining $n+1-(k+1)=n-k$ rows must be the abacus of an $(n-k, n-k+1)$ core, giving the recurrence. 

This paragraph still holds in the case $0\in M$; now deleting the first row and first column from the first $k+1$ rows gives the abacus of a $k, k+1$ core.

Now, consider the first column; the top bead is black.  

In particular, looking after the first column; the dot on runner 0 is black, and so there must be another black dot on at least one of runner $1,2...r+1$, or we would have $r+1$ consecutive white dots in a column.

Let $\ell>0$ be the first positive runner that has a black dot.  Then, on the one hand we can read runners $\ell$ to $r-1$ as an $n-\ell$ abacus.  On the other hand, looking at runners $0..\ell-1$, the first column is fixed as a black dot and $\ell-1$ white dots.  Looking at the second column, the first two dots must be black.  So, looking at the first

\section{Simultaneous Cores with all odd parts} We now turn to addressing simultaneous cores with all odd parts.  

\begin{theorem}Let $E(n) (O(n))$ be the number of simultaneous $(n,n+1)$-cores with all parts even (odd, respectively).  Then
  $$2O(n)-O(n-2)=E(n+2)$$
  Since we can calculate $E(n)$, this determines $O(n)$.
\end{theorem}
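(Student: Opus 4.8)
The plan is to use the abacus model established earlier: an $(n,n+1)$-core partition corresponds to a function $f:\{0,\dots,n-1\}\to\N$ with $f(0)=0$ and $f(k+1)\leq f(k)+1$, and by Lemma \ref{lem:rubbeads} the partition lies in $\mQ^M$ exactly when every maximal run of white beads in a column has length in $M$. For $M=2\N$ this means every gap between consecutive black beads (in a single column) has even length; for odd parts we need the analogous statement for $\mP^{2\N}\setminus\mQ^{2\N}$ from the earlier example, i.e. the smallest part is odd but all consecutive differences are even. So the first step is to translate both $E(n)$ and $O(n)$ into purely combinatorial conditions on the bead configuration on the $n$-abacus: for $E(n)$, the column position $f(i)$ of each row has a fixed parity pattern forced by $f(0)=0$ and evenness of all white runs; for $O(n)$, the pattern is the same except shifted by the one odd run at the bottom.

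Next I would set up the bijection/correspondence that yields $E(n+2)=2O(n)-O(n-2)$. The natural move is to look at the $(n+2)$-abacus for an even-part core and examine its top two runners (runners $0$ and $1$), which by the $n$-core and $(n+1)$-core conditions are completely black (this is exactly the argument used in the proof of Theorem 1 when analyzing the ``second column''). Deleting these two runners should leave, after re-normalization, the abacus of something on $n$ runners; the catch is that the evenness condition interacts with a two-runner shift in a way that mixes the ``smallest part even'' and ``smallest part odd'' cases. I expect that removing runners $0,1$ from an even-part $(n+2,n+3)$-core yields an $n$-runner configuration whose white-run lengths are still all even \emph{except possibly} the bottom run, which can now be odd — i.e. precisely an object counted by $O(n)$ — but with a sign/overcount that must be corrected by the $-O(n-2)$ term, accounting for the configurations where removing the two runners creates an illegal (or doubly-counted) bottom run. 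Making this inclusion-exclusion precise is where the $2$ and the $-O(n-2)$ come from.

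Concretely, the key steps in order: (1) Record the parity structure of $f$ for $E(n)$ and $O(n)$ on the abacus, showing an even-part core is determined by where the ``parity breaks'' (black beads) occur and similarly for odd-part cores with one exceptional bottom segment. (2) For an $(n+2,n+3)$-core with even parts, show runners $0$ and $1$ are forced black and the first interesting structure begins on runner $2$. (3) Build the map that strips runners $0,1$ and renormalizes, landing in configurations on $n$ runners; analyze carefully which $\mQ^{2\N}$-type and which $\mP^{2\N}\setminus\mQ^{2\N}$-type (odd) configurations arise and with what multiplicity. (4) Identify the two sources contributing a factor of $2$ to $O(n)$ (roughly: the bottom run of the stripped abacus being ``continued'' in two inequivalent ways, even vs. odd completion) and the exact family that gets counted twice, which turns out to be naturally in bijection with $(n-2,n-1)$-cores with odd parts, giving the correction $-O(n-2)$. (5) Conclude $E(n+2)=2O(n)-O(n-2)$, and since $E=P_2$ is known by the Fuss–Catalan corollary, solve the linear recurrence for $O(n)$.

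The main obstacle will be step (4): pinning down exactly which odd-part configurations on $n$ runners are hit twice by the stripping map, and proving that family is counted by $O(n-2)$ rather than something else. The parity bookkeeping when a two-runner block is removed is delicate — a white run that straddled the boundary between the deleted block and the rest, or a black bead that was on runner $1$ versus runner $2$, shifts the parity of the bottom segment — and getting the inclusion-exclusion signs right is precisely the point the paper admits is ``not as elegant as the expression seems to merit.'' I would expect to need a small case check ($n=1,2,3,4$) against the known values of $E(n)=P_2(n)$ to calibrate the bijection before committing to the general argument.
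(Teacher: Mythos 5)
Your overall strategy---relate odd-part and even-part cores by adding or removing rows of the abacus, with the factor $2$ and the correction term coming from an inclusion--exclusion over how the modification can be made---is indeed the shape of the paper's argument. But the one concrete mechanism you commit to would fail. You propose to strip the top two runners of the $(n+2)$-abacus of an even-part core, justifying that they are ``completely black'' by the argument used in the proof of Theorem~1 for the second column. That argument requires $0\notin M$: it is the fact that two consecutive black beads force all later beads black that propagates blackness along the column. For even parts $M=2\N$ contains $0$, consecutive black beads are perfectly legal, and an even-part core can certainly have white beads in its second row (any core whose first white run starts in row $2$ of column $0$). So there is no canonically black pair of runners to delete. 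Worse, deleting a fixed pair of rows removes one bead from \emph{every} column, so it changes the length of one white run per column by one and scrambles all the parities at once; there is no way to land cleanly in the odd-part configurations this way.

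What the paper actually does is the reverse and local operation: starting from an odd-part $(n,n+1)$-core, it inserts \emph{two} new rows at a position determined by the last (odd-length) run of white beads, coloured so that this last run gains one white bead while every other run gains zero or two --- and the insertion can be made either immediately before or immediately after that run, which is where the factor $2$ comes from. The diagrams hit twice are those with an ``extra'' black bead on both sides of the run (counted by $CE(n)$), the diagrams missed telescope to $\sum_{k\geq 1}\bigl(CO(n-2k)-CE(n-2k)\bigr)$, and the cores whose abacus lies in a single column are handled separately by a Fibonacci computation, after which everything recombines into $E(n+2)=2O(n)-O(n-2)$. None of this structure --- the variable insertion point, the before/after dichotomy, the telescoping, or the single-column case split --- is recoverable from a top-two-runner deletion, and your proposal explicitly defers exactly these points (``I expect\dots'', ``the main obstacle will be step (4)''), so the decisive part of the argument is missing.
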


Our proof of this theorem seems needlessly complicated -- presumably there's a more direct proof.

\begin{proof}
  If $\lambda$ has all odd parts, then $\lambda_{i}-\lambda_{i+1}$ is always even, except for $\lambda_{\ell}-\lambda_{\ell+1}=\lambda_\ell$, the smallest part.



  In terms of the abacus diagram, any run of white dots has even length, except the last run has odd length.  Thus, we see that $n$-core partitions with all parts odd seem very closely related to $n$-core partitions with all parts even, and one might hope to relate them by just adding or removing an extra white bead to the last run.  If this run is in the first column, this works fine.  However, if the last run is not in the first column, then adding an extra column here will make one of the earlier columns have an odd run of white beads.

  \begin{lemma} Let $DE(n)$ and $DO(n)$ be the number of $(n,n+1)$-core partitions with all parts even and odd, respectively.  Then $DO(n)=DE(n+1)$. \end{lemma}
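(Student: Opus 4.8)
The plan is to prove the lemma via the explicit bijection $\lambda\mapsto\lambda^{+}$, where $\lambda^{+}=(\lambda_1+1,\lambda_2+1,\dots,\lambda_\ell+1)$ is $\lambda$ with one box adjoined to the bottom of each of its columns. Since every partition with distinct even parts has smallest part at least $2$, the rule $\lambda\mapsto\lambda^{+}$ — with inverse $\nu\mapsto(\nu_1-1,\dots,\nu_\ell-1)$ — is already a bijection from partitions with distinct odd parts onto partitions with distinct even parts; the whole content of the lemma is that it matches up $\mathcal{C}_n$ with $\mathcal{C}_{n+1}$.

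I would base everything on the following observation: \emph{a partition $\lambda$ with distinct parts that is an $(n,n+1)$-core satisfies $\lambda_1+\ell(\lambda)\le n$} (call this $(*)$). Granting $(*)$, the lemma is immediate. The largest hook length of $\lambda$ is $\lambda_1+\ell(\lambda)-1$, so any partition with $\lambda_1+\ell(\lambda)\le n$ has all hook lengths at most $n-1$ and is in particular an $(n,n+1)$-core; combined with $(*)$, this says that a partition with distinct parts lies in $\mathcal{C}_n$ exactly when $\lambda_1+\ell(\lambda)\le n$. Now $\ell(\lambda^{+})=\ell(\lambda)$ and $\lambda^{+}_1=\lambda_1+1$, so $\lambda\mapsto\lambda^{+}$ raises $\lambda_1+\ell(\lambda)$ by exactly $1$; it therefore sends the distinct-odd partitions with $\lambda_1+\ell(\lambda)\le n$ bijectively onto the distinct-even partitions with $\nu_1+\ell(\nu)\le n+1$, that is, the distinct-odd members of $\mathcal{C}_n$ onto the distinct-even members of $\mathcal{C}_{n+1}$. (The empty partition is fixed and is counted on each side.) Hence $DO(n)=DE(n+1)$.

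To prove $(*)$, read the hypothesis off the boundary path $w(\lambda)=a_1a_2\cdots$. We have $a_1=E$; since $\lambda$ is an $n$-core, $a_{n+1}=E$; since $\lambda$ is an $(n+1)$-core, $a_{n+2}=E$; so $a_{n+1}$ and $a_{n+2}$ form a pair of consecutive $E$-steps. But, as in the proof of Lemma \ref{lem:rubbeads}, $w(\lambda)=E\,S^{\lambda_1-\lambda_2}E\,S^{\lambda_2-\lambda_3}\cdots E\,S^{\lambda_\ell}E\,E\,E\cdots$, with $\ell(\lambda)$ $E$-steps and $\lambda_1$ $S$-steps before the trailing run of $E$'s; when the parts of $\lambda$ are distinct none of the intermediate $S$-runs is empty, so the only two consecutive $E$-steps in $w(\lambda)$ are those inside the trailing run, which begins at position $\ell(\lambda)+\lambda_1+1$. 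Therefore $n+1\ge\ell(\lambda)+\lambda_1+1$, which is $(*)$. (For $\lambda=\emptyset$ the inequality holds trivially.)

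The step I expect to be the real obstacle is isolating $(*)$: recognizing that, for a partition with distinct parts, being simultaneously an $n$-core and an $(n+1)$-core already forces every hook length below $n$. This is precisely what breaks down when parts may repeat — for instance $(3,1,1)$ is a $(3,4)$-core with a hook of length $5$ — and it is the reason the ``adjoin one box / one white bead'' idea succeeds here even though, as remarked just before the lemma, the naive version of it fails in the all-parts-even versus all-parts-odd problem.
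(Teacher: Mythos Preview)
Your proof is correct, and the bijection $\lambda\mapsto\lambda^{+}$ you use is exactly the paper's map $\varphi$: adding one white bead to the final run in the first column of the $n$-abacus is, on the level of partitions, precisely adding $1$ to every part. Where your argument differs is in the verification. The paper checks on the abacus that the resulting first column has all even white runs and that nothing spills into later columns; you instead isolate the characterization $(*)$, that a distinct-part partition lies in $\mathcal{C}_{n,n+1}$ iff $\lambda_1+\ell(\lambda)\le n$, and then observe that $\lambda\mapsto\lambda^{+}$ increases this quantity by exactly one. This is a genuine, if modest, improvement in clarity: $(*)$ makes explicit what the paper's phrase ``determined by the first column of their abacus'' is really saying, and it lets you verify the core condition via the single hook $h(1,1)=\lambda_1+\ell(\lambda)-1$ rather than by tracking bead runs. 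Your closing remark is also apt: $(*)$ is exactly what fails once parts may repeat, which is why the analogous one-bead trick does not directly prove the full $O(n)$ versus $E(n)$ relation.
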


  \begin{proof} We define a bijection $\varphi:DO(n)\to DE(n+1)$.  First, $\varphi$ takes the empty partition to the empty partition. Since in both cases the partitions are required to have distinct parts, they are determined by the first column of their abacus.

    Let $\lambda\in DO(n)$ be nonempty;  then its $n$-abacus has at least one run of white beads.  All runs of white beads in its $n$-abacus have an even length, except the last run has an odd length.  We define the $n+1$-abacus of $\varphi(\lambda)$ by its first column -- it is the same as that of $\lambda$, except with one white bead added to its last run.  The result clearly has distinct parts and all runs of white beads having even length, and hence is in $DE(n+1)$.
    Clearly $\varphi$ is invertible, with $\varphi^{-1}$ removing one row from the $n+1$-abacus.
    \end{proof}

  There are two cases.  The simple case is if all the white circles are in the first column of the $n$-abacus of $\lambda$ (that is, $f(k)\leq 1$).  Given such a simultaneous core with all parts odd, we may simply add one white bead to odd run of beads, adding an extra row to the abacus, and getting the abacus diagram of an $(n+1, n+2)$-core with simultaneous part.  Similarly, given a simultaneous core with all parts even, whose abacus is all in the first column, we may just remove a white dot from the last run of white dots and get an $(n, n+1)$ core with all parts odd. 

  Let $SE(n), SO(n)$ be the number of $(n, n+1)$-cores, whose $n$ abacus has all white circles in the first column, and all parts even, or odd, respectively.  The previous paragraph shows $SO(n)=SE(n+1)$.  On the other hand, we can derive a recursion for $SE(n)$ just as in our main theorem, by decomposing over the location of the second black circle in the first column.  This gives
  $$SE(n)=SE(n-1)+SE(n-3)+SE(n-5)+\cdots=SE(n-1)+SE(n-2)$$
  by removing the first term of the sum and using the identity on the remaing terms.  Note though that $SE(0)=SE(1)=SE(2)=1$, as the second equation only holds if $n\geq 3$.  Thus, $SO(n-1)=SE(n)=F(n)$.

  The complicated case is when the abacus diagram of $\lambda$ has white dots on at least two columns (that is, there is some $x$ with $f(x)\geq 2$.  Denote $CE(n), CO(n)$, the number of such $(n, n+1)$-cores with all parts even (or odd, respectively).

  If $\lambda\in CO(n)$, we may not simply insert a single extra row into our abacus diagram with a white dot in the last column and get a partition in $CE(n+1)$.  Although doing so would make the last run of white circles have an even number, the runs of white circles in the previous columns would now have \emph{odd} number of beads.

  In terms of the function $f$, suppose the last run was in the $m$th column, from rows $\ell+1$ to $\ell+k$; i.e., $f(\ell)=m-1, f(\ell+1)=f(\ell+2)=\cdots=f(\ell+k)=m$, and $f(\ell+k+i)<m$ for $i\geq 0$.  Then the description above is attempts to define a new function $g$ by
  $$g(i)=\begin{cases} f(i) & \mbox{if } i\leq \ell+k \\ m & \mbox{if } i=\ell+k+1 \\ f(i-1) & \mbox{if }i\geq \ell+k+2 \end{cases}$$
The final run of white circles in the abacus of $g$ does indeed have one more than that of $f$, but the run of beads in the $m-1$ column just before also has one more white bead in it; since this run was by assumption even before, it is now odd, a problem.

  To fix this, we will insert \emph{two} new rows into the abacus diagram.  One of these rows will add a white bead to the last run of white beads; the other other one will add a black bead immediately or after this run, and have white beads in all previous columns.  Thus, we've added one white bead to the very last run of whites, but to every other run of white beads we've either added no white beads or two white beads, and the result is indeed in $CE(n+2)$.

  However, there are two different ways we could add two columns as above -- we can stick the black bead immediately \emph{before} the last run, or immediately \emph{after} the last run.  In terms of the function $f$ defining the abacus, these two new functions $g_b$ and $g_a$ (for before and after) are
  $$g_b(i)=\begin{cases} f(i) & \mbox{if } i\leq \ell \\
  m-1 & \mbox{if } i=\ell+1 \\
  m & \mbox{if } i=\ell+2 \\
  f(i-2) & \mbox{if }i\geq \ell+3 \end{cases}$$

  $$g_a(i)=\begin{cases} f(i) & \mbox{if } i\leq \ell+k \\
  m & \mbox{if } i=\ell+k+1 \\
  m-1 & \mbox{if } i=\ell+k+2 \\
  f(i-2) & \mbox{if }i\geq \ell+k+3 \end{cases}$$
\begin{tikzpicture}[scale=.6]  
\foreach \x in {0,1,...,2}           
\foreach \y in {0,1,...,4}
\draw (\x,\y) circle (.4);
\begin{scope}
  \clip (-.5,4.5)--(-.5,3.5)--(.5,3.5)--(.5,1.5)--(1.5,1.5)--(1.5,.5)--(.5,.5)--(.5,-.5)--(2.5,-.5)--(2.5,4.5)--(-.5,4.5);
\foreach \x in {0,1,...,2}           
\foreach \y in {0,1,...,4}
\filldraw (\x,\y) circle (.4);
\end{scope}
\node at (1,-1) {$f$};

\begin{scope}[xshift=4cm]
\foreach \x in {0,1,...,2}           
\foreach \y in {0,1,...,6}
\draw (\x,\y) circle (.4);
\begin{scope}
  \clip (-.5,6.5)--(-.5,5.5)--(.5,5.5)--(.5,2.5)--(1.5,2.5)--(1.5,.5)--(.5,.5)--(.5,-.5)--(2.5,-.5)--(2.5,6.5)--(-.5,6.5);
\foreach \x in {0,1,...,2}           
\foreach \y in {0,1,...,6}
\filldraw (\x,\y) circle (.4);
\end{scope}
\draw[thin, gray] (-.5,1.5)--(-.5,3.5)--(2.5,3.5)--(2.5,1.5)--cycle;
\node at (1,-1) {$g_b$};
\node at (3.5,2.5) {\begin{tabular}{c}Added \\ rows\end{tabular}};
  
\end{scope}

\begin{scope}[xshift=10cm]
\foreach \x in {0,1,...,2}           
\foreach \y in {0,1,...,6}
\draw (\x,\y) circle (.4);
\begin{scope}
  \clip (-.5,6.5)--(-.5,5.5)--(.5,5.5)--(.5,3.5)--(1.5,3.5)--(1.5,1.5)--(.5,1.5)--(.5,-.5)--(2.5,-.5)--(2.5,6.5)--(-.5,6.5);
\foreach \x in {0,1,...,2}           
\foreach \y in {0,1,...,6}
\filldraw (\x,\y) circle (.4);
\end{scope}
\draw[thin, gray] (-.5,.5)--(-.5,2.5)--(2.5,2.5)--(2.5,.5)--cycle;
\node at (1,-1) {$g_a$};
\node at (3.5,1.5) {\begin{tabular}{c}Added \\ rows\end{tabular}};
\end{scope}

\end{tikzpicture}

  Thus, there are $2CO(n)$ abacus diagrams in $CE(n+2)$ obtained in this way.  However,  some of the abacus diagrams in $CE(n+2)$ are obtained twice in this way, while others aren't obtained at all.

  An abacus diagram in $CE(n+2)$ is obtained twice in this way if both immediately before and following this run there's an ``extra'' black bead in that column. More precisely, if the last run of white circles is in the $m$th column from $\ell+1$ to $\ell+k$, then we must have $f(\ell)=m-1$ and $f(\ell+1)=\cdots=f(\ell+k=m$.  We say that there is an extra black bead \emph{before} if in addition $f(\ell-1)=m-1$, and we say there is an extra black bead \emph{after} if in addition $f(\ell+k+1=m-1)$.

  In case there is an extra black bead both before and after, we may remove both the rows have the abacus having these ``extra'' black beads and get an abacus diagram in $CE(n)$, and vice versa.  Thus, we are double counting $CE(n)$ different elements.

\begin{tikzpicture}[scale=.6]
\foreach \x in {0,1,...,2}           
\foreach \y in {0,1,...,6}
\draw (\x,\y) circle (.4);
\begin{scope}
  \clip (-.5,6.5)--(-.5,5.5)--(.5,5.5)--(.5,3.5)--(1.5,3.5)--(1.5,1.5)--(.5,1.5)--(.5,-.5)--(2.5,-.5)--(2.5,6.5)--(-.5,6.5);
\foreach \x in {0,1,...,2}           
\foreach \y in {0,1,...,6}
\filldraw (\x,\y) circle (.4);
\end{scope}
\node at (1,-1) {\begin{tabular}{c} extra before \\ and after \end{tabular}};
\draw[thin, gray] (-.5,.5)--(2.5,.5)--(2.5,1.5)--(-.5,1.5)--cycle;
\draw[thin, gray] (-.5,4.5)--(-.5,5.5)--(2.5,5.5)--(2.5,4.5)--cycle;

\begin{scope}[xshift=4cm]
  \foreach \x in {0,1,...,2}           
\foreach \y in {0,1,...,4}
\draw (\x,\y) circle (.4);
\begin{scope}
  \clip (-.5,4.5)--(-.5,3.5)--(.5,3.5)--(.5,2.5)--(1.5,2.5)--(1.5,1.5)--(.5,1.5)--(.5,-.5)--(2.5,-.5)--(2.5,4.5)--(-.5,4.5);
\foreach \x in {0,1,...,2}           
\foreach \y in {0,1,...,4}
\filldraw (\x,\y) circle (.4);
\end{scope}
\node at (1,-1) {\begin{tabular}{c}add two \\ before this\end{tabular}};
\end{scope}

\begin{scope}[xshift=8cm]
  \foreach \x in {0,1,...,2}           
\foreach \y in {0,1,...,4}
\draw (\x,\y) circle (.4);
\begin{scope}
  \clip (-.5,4.5)--(-.5,3.5)--(.5,3.5)--(.5,1.5)--(1.5,1.5)--(1.5,.5)--(.5,.5)--(.5,-.5)--(2.5,-.5)--(2.5,4.5)--(-.5,4.5);
\foreach \x in {0,1,...,2}           
\foreach \y in {0,1,...,4}
\filldraw (\x,\y) circle (.4);
\end{scope}
\node at (1,-1) {\begin{tabular}{c}add two \\ after this\end{tabular}};
\end{scope}

\begin{scope}[xshift=12cm]
  \foreach \x in {0,1,...,2}           
\foreach \y in {0,1,...,4}
\draw (\x,\y) circle (.4);
\begin{scope}
  \clip (-.5,4.5)--(-.5,3.5)--(.5,3.5)--(.5,2.5)--(1.5,2.5)--(1.5,.5)--(.5,.5)--(.5,-.5)--(2.5,-.5)--(2.5,4.5)--(-.5,4.5);
\foreach \x in {0,1,...,2}           
\foreach \y in {0,1,...,4}
\filldraw (\x,\y) circle (.4);
\end{scope}
\node at (1,-1) {\begin{tabular}{c}extra rows\\ removed\end{tabular}};
\end{scope}

\end{tikzpicture}  
  The diagrams in $CE(n+2)$ that aren't obtained at all are those that don't have an ``extra'' black bead before OR after the last run of white beads.   In this case, we see that immediately above the black bead that's the upper bound of this run, we must have another run of white beads; otherwise, the second to last column would have an odd run of white beads.

More long-windedly in terms of the function $f$: since the run of white beads is the last one, we must have $f(\ell+k+1\leq m-2)$.  What about $f(\ell-1)$?  Since $f(\ell)=m-1$, we must have $f(\ell-1)\geq m-2$. Since hte last row of beads is at height $m$, we must have $f(\ell-1)\leq m$.  Since there is no extra bead in front of the run by assumption, we cannot have $f(\ell-1)=m-1$.  Finally, we cannot have $f(\ell-1)=m-2$, as then we'd have an odd length run of beads in the $m-1$st column in rows $\ell$ to $\ell+k$.  Therefore, we must have $f(\ell-1)=m$.

\begin{tikzpicture}[scale=.6]
  \foreach \x in {0,1,...,2}           
\foreach \y in {0,1,...,11}
\draw (\x,\y) circle (.4);
\begin{scope}
  \clip (-.5,11.5)--(-.5,10.5)--(.5,10.5)--(.5,9.5)--(1.5,9.5)--(1.5,5.5)--(.5,5.5)--(.5,4.5)--(1.5,4.5)--(1.5,.5)--(-.5,.5)--(-.5,-.5)--(2.5,-.5)--(2.5,11.5)--(-.5,11.5);
\foreach \x in {0,1,...,2}           
\foreach \y in {0,1,...,11}
\filldraw (\x,\y) circle (.4);
\end{scope}
\draw[thin, gray] (-.5,.5)--(-.5,6.5)--(2.5,6.5)--(2.5,.5)--cycle;

\end{tikzpicture}

Thus, in this case we can delete the whole final run of white beads (which has length $2c$ for some even $c$), the black bead bounding the bead above this, and the last white bead from this run before (hence deleting $2c+2$ rows), and get a partition in $CO(n-2c)$.  In terms of the function $f$, we are defining a new function $h$ by

$$h(i)=\begin{cases} f(i) & \mbox{if } i\leq \ell-2 \\
  f(i+2c+2) & \mbox{if } i\geq \ell-1  \end{cases}$$
However, there's the added wrinkle that we don't get all partitions in $CO(n-2k)$ in this way, but only those with no ``extra'' black dots after the last run of white beads -- the function $h$ has its last odd length run ending at $h(\ell-2)=f(\ell-2)=m$, but $h(\ell-1)=f(\ell+2c+1)\leq m-2$.

But if we have an abacus diagram of a partition in $CO(n-2k)$ that \emph{does} have a ``extra'' black dot, after the last run of white beads, we may just turn that extra black dot to white, obtaining an arbitrary diagram in $CE(n-2k)$.
\begin{tikzpicture}[scale=.6]
\foreach \x in {0,1,...,2}           
\foreach \y in {0,1,...,4}
\draw (\x,\y) circle (.4);
\begin{scope}
  \clip (-.5,4.5)--(-.5,3.5)--(.5,3.5)--(.5,2.5)--(1.5,2.5)--(1.5,1.5)--(.5,1.5)--(.5,-.5)--(2.5,-.5)--(2.5,4.5)--(-.5,4.5);
\foreach \x in {0,1,...,2}           
\foreach \y in {0,1,...,4}
\filldraw (\x,\y) circle (.4);
\end{scope}
\node at (1,-1) {extra after};

\begin{scope}[xshift=8cm]
\foreach \x in {0,1,...,2}           
\foreach \y in {0,1,...,4}
\draw (\x,\y) circle (.4);
\begin{scope}
  \clip (-.5,4.5)--(-.5,3.5)--(.5,3.5)--(.5,2.5)--(1.5,2.5)--(1.5,.5)--(.5,.5)--(.5,-.5)--(2.5,-.5)--(2.5,4.5)--(-.5,4.5);
\foreach \x in {0,1,...,2}           
\foreach \y in {0,1,...,4}
\filldraw (\x,\y) circle (.4);
\end{scope}
\node at (1,-1) {turned 'extra' to white};
\end{scope}

\end{tikzpicture}

That is, we've seen the number of partitions in $CE(n+2)$ we've failed to count in $2CO(n)$ is $\sum (CO(n-2k)-CE(n-2k))$.

  Putting it all together, we have:
 $$CE(n+2)=2CO(n)-CE(n)+\sum_{k\geq 1} (CO(n-2k)-CE(n-2k))$$
  or
  $$CO(n)=\sum_{k\geq 0}\big( CE(n+2-2k)-CO(n-2k)\big)$$
  Removing the first term of the sum, reindexing, and reapplying the equation, gives
  \begin{align*}
    CO(n)&=CE(n+2)-CO(n)+\sum_{k\geq 0}\big( CE(n-2+2-2k)-CO(n-2-2k)\big)\\
    &=CE(n+2)-CO(n)-CO(n-2)
    \end{align*}
  and hence $CE(n+2)=2O(n)-CO(n-2)$.

  To get the corresponding identity for $O(n)$ and $E(n)$, we need to add back the one column simultaneous cores counted by $SE(n)$ and $SO(n)$.  But since $SO(n-1)=SE(n)=F(n)$, we have
 $$SE(n+2)=F(n+2)=F(n+1)+F(n)=2F(n+1)-F(n-1)=2SO(n)-SO(n-2)$$

\end{proof}
\section{Enriched recursions} \label{sec:enriched} It is natural to not just study the number of simultaneous cores, but various functions on them.  We briefly summarize some results about how these play with our basic recursion.

The most obvious are the size $|\lambda|$, length (or number of parts) $\ell(\lambda)$, and largest part $\lambda_1$. In particular, we can enrich the generating function $C^M(q)$ to count the partitions $\lambda$ according to any or all of these variables.  
Armstrong, Hanusa and Jones \cite{AHJ} define $(q,t)$-rational Catalan numbers are the sum
Paramonov \cite{Paramonov} studies this sum over partitions with distinct parts.

The three basic statistics can all be read off easily from the abacus diagram:
\begin{itemize}
\item The length $\ell(\lambda)$ is the number of black beads coming before at least one white bead
\item The largest part $\lambda_1$ is the number of white beads on the abacus
  \item The size $|\lambda|$ is the number of inversions, that is, pairs $(a, b)$ where $a$ is a black bead, $b$ is a white bead, and $a$ comes before $b$
  \end{itemize}

When $0\notin M$, and the partition has distinct parts, all of these statistics play well with our basic recurrence.

Let $TL_M(q)$ be the generating function for the sum of all the largest parts, $TP_M(q)$ the total lengths, and $TS_M(q)$ the generating function for the total sizes of all parts, respectively.  Then
$$TL_M(q)=q^2\frac{d}{dq}\chi_M(q)F_M(Q)\frac{1}{1-q\chi_M(q)}$$
$$TP_M(q)=q\chi_M(q)F_M(q)\frac{1}{1-q\chi_M(q)}$$
$$TS_M(q)=(q^2\frac{d}{dq}\chi_M(q)F_M(q)+q\chi_MTL_M(q))\frac{1}{1-q\chi_M(q)}$$
$$TS_M=TL_M\frac{1}{1-q\chi_M}$$

$$TL=\frac{q^2\chi^\prime}{(1-q)(1-q\chi)^2}$$
  $$TP=\frac{q\chi}{(1-q)(1-q\chi)^2}$$
  $$TS=\frac{q^2\chi^\prime}{(1-q)(1-q\chi)^3}$$
 
\section{Beyond $(n, n+1)$-cores} \label{sec:ab}

  \bibliographystyle{plain}
  \bibliography{simultaneouscores}

\end{document}